\documentclass[12pt]{amsart}
\usepackage{amsmath,amssymb,amscd,graphicx,verbatim,url}
\usepackage{hyperref}
\pdfoutput=1
\usepackage{mathpazo}
\usepackage{graphicx}
\newcommand{\tre}{\text{Re}}

\theoremstyle{plain}

\theoremstyle{definition}

\newtheorem*{theorem}{Theorem}
\newtheorem*{proposition}{Proposition}
\newtheorem*{lemma}{Lemma}
\newtheorem*{remark}{Remark}

\begin{document}
\title[Notes on Conrey-Iwaniec]{On the theorem\\ of Conrey and Iwaniec}
\author{Jeffrey Stopple}
\address{Department of Mathematics\\University of California, Santa Barbara\\Santa Barbara, CA 93106-3080}
\email{stopple@math.ucsb.edu}
\subjclass[2000]{11M20,11R29}
\begin{abstract}
An exposition on \lq Spacing of zeros of Hecke L-functions and the class number problem\rq\ by Conrey and Iwaniec; any errors are my own.
\end{abstract}

\maketitle

\section*{History}  In number theory, binary quadratic forms $ax^2+bxy+cy^2=Q(x,y)$ have a long and glorious history.  Motivated by Fermat's and Euler's questions about the range of such functions (Which odd primes $p$ can be written as $p=x^2+y^2$?  Or generally $p=x^2+ny^2$?), one defines equivalence relation which identifies forms with the same range:
\[
Q^\prime\sim Q\quad\text{if}\quad Q^\prime(x,y)=Q((x,y)\gamma)
\]
for some $\gamma$ in $SL(2,\mathbb Z)$. The form $Q$ is positive definite if the discriminant $-D=b^2-4ac<0$; a calculation shows the discriminant is an invariant of the equivalence class.  Gauss showed the classes of fixed discriminant $-D$ form an abelian group $\mathcal C(-D)$ of finite order $h(-D)$, and speculated \cite[Art. 303]{Gauss} about the growth of this class number as a function of the discriminant:
\begin{quote}
\ldots  the series of  [discriminants] corresponding to the same given classification (i.e. the given number of both genera and classes) always seems to terminate with a finite number\ldots     However \emph{rigorous} proofs of these observations seem to be very difficult.
\end{quote}
Dirichlet's analytic class number formula tells us
\begin{equation}
\label{Eq:CNF}
L(1,\chi_{-D})=\frac{\pi h(-D)}{D^{1/2}},
\end{equation}
where $L(s,\chi_{-D})$ is defined, for $\tre(s)>1$ by
\[
L(s,\chi_{-D})=\sum_{n=1}^\infty \chi_{-D}(n)n^{-s}.
\]
The Kronecker symbol $\chi_{-D}$ is equal, in the case $D=q$ an odd prime, to the Jacobi symbol
\[
\chi_{-q}(a)=\left(\frac{a}{q}\right)=
\begin{cases}
 0,&\text{if }q | a\\
1,&\text{if }a\equiv \square \bmod q\\
-1,&\text{if }a\text{ is not }\equiv \square \bmod q.
\end{cases}
\]
The Kronecker symbol extends the Jacobi symbol multiplicatively in the \lq denominator\rq\ for odd primes, and no longer detects squares modulo $D$.  It has instead the important property that for primes $p$ not dividing $D$, $\chi_{-D}(p)=1$ if and only if $p=Q(x,y)$ for \emph{some} form $Q$ of discriminant $-D$.

With the analytic class number formula in hand, one can show the class number $h(-D)$ is not be much larger than $D^{1/2}$.  In fact 
\[
h(-D)\ll_\epsilon D^{1/2+\epsilon}\qquad\text{ for any }\epsilon>0.
\]
For lower bounds, Gauss' observation above still holds.  Via a continuity argument, if $L(\beta,\chi_{-D})=0$ for some real $\beta$ near $s=1$ then one expects $L(1,\chi_{-D})$ would be small, and so $h(-D)$ would be small compared to $D^{1/2}$.  Such a \lq Landau-Siegel zero\rq\  would of course violate the Generalized Riemann Hypothesis.  Before 1918, Hecke realized one could show the converse: in the absence of a Landau-Siegel zero one has
\[
 D^{1/2+\epsilon}\ll_\epsilon h(-D)\qquad\text{ for any }\epsilon>0.
\]
Siegel was then able to show that for any $\epsilon>0$, there is an ineffective constant $C(\epsilon)$, such that
\begin{equation}\label{Eq:SiegelThm}
C(\epsilon)D^{1/2-\epsilon}<h(-D).
\end{equation}
\lq Ineffective\rq\ means that the proof of the theorem bifurcates depending on whether or not the Generalized Riemann Hypothesis (GRH) is true or false.  Thus determining a value for the constant requires resolving the GRH!
Unconditionally the best we can do is the Goldfeld-Gross-Zagier lower bound
\begin{equation}\label{Eq:GGZ}
\log(D)\ll h(-D),
\end{equation}
with an explicitly known implied constant.

\section*{Introduction}  In \cite{CI}, Conrey and Iwaniec deduce lower bounds on the class number $h(-D)$, on the hypothesis that (roughly speaking) sufficiently many zeros of various $L$-functions attached to characters of the class group $\mathcal C(-D)$ are sufficiently closely spaced.  In particular, for a special case one may consider the zeros of $\zeta(s)L(s,\chi_{-D})$, where $\zeta(s)$ is the Riemann zeta function.  

The theorem referred to is the culmination of a long history of folklore, announced, and unpublished results \cite{HB,Montgomery,M2,Stark,Watkins} regarding the Deuring-Heilbronn phenomenon.   In this situation, the existence of a small class number (or equivalently, a Landau-Siegel zero of $L(s,\chi_{-D})$) forces very many zeros of $\zeta(s)$ to be very regularly spaced instead.   
The difficulty others have had in getting a result in a publishable form attests to the accomplishment of Conrey and Iwaniec.  Much of their paper is devoted to a technical result \cite[Proposition 9.2]{CI} (see (\ref{Eq:9.12}) below.)  

This expository note is aimed at explaining the consequences of that proposition.  We include a modest investigation of the spacing of the lowest $10^7$ zeros of $\zeta(s)$.
 In the last section we investigate what their conditional bound has to say about the order $p(-D)$ of the principal genus.  

\section*{Exposition}  Our starting point is Proposition 9.2 of \cite{CI}, which we briefly summarize.
\begin{proposition}[Conrey-Iwaniec] Let $\mathcal S$ be a set of zeros of $\zeta(s)$ on the critical line $\rho=1/2+it$ with $2\le t\le T$ which are spaced by at least one, and let $\rho^\prime=1/2+it^\prime$ be the nearest zero on the critical line with $t<t^\prime$.  Then
\begin{multline}\label{Eq:9.12}
\sum_\rho\left|\frac{\sin(t-t^\prime)\log(t)}{(t-t^\prime)\log(t)}\right|\ll\\
\frac{T}{\log T}(\log D)^6+T\log T L(1,\chi_{-D})^{1/2}(\log D)^3
\end{multline}
where the constant implied by the $\ll$ is absolute.  (For simplicity we have specified the $L$-function to be $\zeta(s)L(s,\chi_{-D})$, and the points in the set $\mathcal S$ to be zeros, which eliminates the third term on the right side of \cite[(9.12)]{CI}.)  
\end{proposition}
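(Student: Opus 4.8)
The plan is to treat this as the specialization of the Conrey--Iwaniec machinery in which the ambient object is the Dedekind zeta function $\zeta_K(s)=\zeta(s)L(s,\chi_{-D})$ of $K=\mathbb{Q}(\sqrt{-D})$, so that the arithmetic of $K$ (and hence the dependence on $D$) can be injected through an amplifier built from the ideals and class-group characters, while the ordinates being counted are those of $\zeta$. The first move is to recognize the summand as a band-limited kernel: writing $L=\log t$, one has
\[
\frac{\sin\big((t-t')L\big)}{(t-t')L}=\frac1{2L}\int_{-L}^{L}e^{i(t-t')\xi}\,d\xi,
\]
so the weight is the Fourier transform of the indicator of $[-L,L]$, band-limited to frequencies $|\xi|\le\log t\le\log T$. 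Under $\xi=\log y$ this converts the factor $e^{i(t-t')\xi}$ into $y^{it}y^{-it'}$ and caps the dual variable at integers $y\le T$, which is exactly the length the arithmetic side will require.

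The second, and essential, preliminary step is to dispose of the absolute value. Since $\rho'$ is the nearest zero above $\rho$, I would majorize $\big|\tfrac{\sin((t-t')\log t)}{(t-t')\log t}\big|$ by a nonnegative band-limited kernel $F$ of Fej\'er or Beurling--Selberg type with the same frequency support, reducing the sum over the $1$-separated set $\mathcal S$ to a positive pair expression $\sum_{\gamma,\gamma'}F(\gamma-\gamma')$ over ordinates, in the spirit of Montgomery's pair-correlation method. The $1$-separation of $\mathcal S$ is what guarantees $|\mathcal S|\ll T$ and prevents a single anomalously close pair from being counted with runaway multiplicity.

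To evaluate the resulting positive sum I would apply the Guinand--Weil explicit formula to $\zeta_K$ (equivalently to $\zeta$ and to $L(\cdot,\chi_{-D})$ separately), paired with the chosen test function. Band-limiting to $|\xi|\le\log T$ localizes the dual sum to prime-ideal powers of norm $\le T$, weighted by $\Lambda_K(n)=\Lambda(n)\big(1+\chi_{-D}(n)\big)$. This splits the arithmetic side into a principal piece carrying $\Lambda(n)$ and a character piece carrying $\Lambda(n)\chi_{-D}(n)$. The principal piece supplies the main term of order $T/\log T$; taking the amplifier to have length a fixed power of $\log D$, of order $(\log D)^3$, its diagonal contributes the square $(\log D)^6$, accounting for the first term. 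The character piece is the origin of the second term: estimating $\sum_n\Lambda(n)\chi_{-D}(n)W(n)$ by partial summation against $L(\cdot,\chi_{-D})$ and its logarithmic derivative, and then applying Cauchy--Schwarz to a mean square, yields the factor $L(1,\chi_{-D})^{1/2}$ through the class number formula \eqref{Eq:CNF}, while a single copy of the amplifier survives as $(\log D)^3$ and the second-moment bound leaves $T\log T$.

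The hard part is precisely this last estimate carried out with an \emph{absolute} implied constant uniform in $D$: one must show that the character sum genuinely contributes only $L(1,\chi_{-D})^{1/2}$ rather than the trivial $\log D$, which demands a delicate large-sieve or second-moment input to control the off-diagonal cross terms in the amplified sum. Coordinating this with the removal of the absolute value --- so that the nonnegative majorant $F$ does not inflate the principal contribution beyond $T/\log T$ --- is where the genuine technical weight of \cite[Proposition 9.2]{CI} lies.
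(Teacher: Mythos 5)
The first thing to say is that the paper contains no proof of this statement: the Proposition is imported, in simplified form, directly from \cite[Proposition 9.2]{CI}, and the note explicitly takes it as a black box (``Our starting point is Proposition 9.2 of \cite{CI}\ldots Much of their paper is devoted to a technical result''). So the only standard to measure your proposal against is the Conrey--Iwaniec original, and against that standard what you have written is a strategy outline rather than a proof. You concede as much in your final paragraph --- ``the hard part is precisely this last estimate\ldots is where the genuine technical weight of \cite[Proposition 9.2]{CI} lies'' --- and that concession covers essentially all of the content of the statement: producing the factor $L(1,\chi_{-D})^{1/2}$ rather than a power of $\log D$, uniformly in $D$ with an absolute constant, \emph{is} the theorem.

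Beyond incompleteness, two steps as described would fail. First, the majorization: $\sin x/x$ is band-limited but $|\sin x/x|$ is not, and no nonnegative majorant of $|\sin x/x|$ with compactly supported (integrable) Fourier transform exists at all, since such a majorant would have $\hat F(0)=\int F\ge\int|\sin x/x|\,dx=\infty$. Any workaround (truncating the range of $(t-t')\log t$, dyadic decomposition of the kernel) costs logarithms that must be reconciled with the $T/\log T$ main term, and this reconciliation is nowhere addressed. Second, the arithmetic engine you propose --- the Guinand--Weil explicit formula for $\zeta_K$ with test functions band-limited to $|\xi|\le\log T$ --- is structurally Montgomery's pair-correlation computation, and within that support the character piece $\sum_n\Lambda(n)\chi_{-D}(n)W(n)$ contributes powers of $\log D$, not $L(1,\chi_{-D})^{1/2}$; bounding a prime sum of that shape by $L(1,\chi_{-D})^{1/2}$ is not a standard consequence of partial summation and is essentially equivalent to what must be proved. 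The mechanism Conrey and Iwaniec actually use is different: $L(\rho,\psi_0)=\zeta(\rho)L(\rho,\chi_{-D})$ vanishes at every zero $\rho$ of $\zeta$, so the approximate functional equation forces Dirichlet polynomials in the representation numbers $r(n)=\sum_{d\mid n}\chi_{-D}(d)$ to be small at the sample points, and the factor $L(1,\chi_{-D})$ enters through the sparsity of the set of integers represented by forms of discriminant $-D$ (via (\ref{Eq:CNF}), the mean of $r(n)$ is $L(1,\chi_{-D})$). That vanishing-plus-sparsity mechanism, together with a mean-value theorem for Dirichlet polynomials over the $1$-separated set, is absent from your outline; the class-group amplifier you invoke never actually appears in your computation. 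As it stands the proposal identifies the shape of the answer but not a viable route to it.
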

\begin{remark}
Observe how striking this is: the left side depends only on the zeros of $\zeta(s)$, while the right side depends only on the discriminant $-D$!
\end{remark}

We now specify more precisely a choice of $\mathcal S$ as in \S 10 of \cite{CI}.  Let $\alpha$ denote $1/\sqrt{\log T}$, and assume about the $\rho$ in $\mathcal S=\mathcal S(\alpha,T)$ that
\begin{equation}\label{defs}
|t-t^\prime|\le\frac{\pi(1-\alpha)}{\log t}.
\end{equation}
\begin{lemma}
\begin{multline}\label{Eq:3}
\sharp S(\alpha,T)\ll\frac{T}{(\log T)^{1/2}}(\log D)^6+\\
T(\log T)^{3/2} (\log D)^3 D^{-1/4}h(-D)^{1/2}.
\end{multline}
\end{lemma}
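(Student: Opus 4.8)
The plan is to exploit the fact that passing from the full sum in (\ref{Eq:9.12}) to the restricted set $S(\alpha,T)$ converts the left-hand side into essentially a counting function. First I would observe that the defining condition (\ref{defs}) says precisely that the argument $x=(t-t^\prime)\log t$ of the normalized sine in (\ref{Eq:9.12}) satisfies $0\le x\le \pi(1-\alpha)$ for every $\rho\in S(\alpha,T)$. Since $x\mapsto \sin(x)/x$ is positive and strictly decreasing on $[0,\pi)$ (its derivative has the sign of $x\cos x-\sin x$, which is negative there), on this range it is bounded below by its value at the right endpoint:
\[
\left|\frac{\sin((t-t^\prime)\log t)}{(t-t^\prime)\log t}\right|\ge \frac{\sin(\pi(1-\alpha))}{\pi(1-\alpha)}=\frac{\sin(\pi\alpha)}{\pi(1-\alpha)}\gg\alpha,
\]
for all $\alpha$ sufficiently small, using $\sin(\pi\alpha)\sim\pi\alpha$ as $\alpha\to0$. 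As $\alpha=(\log T)^{-1/2}\to0$ with $T$, this holds for $T$ large.

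The second step is to apply the proposition with $\mathcal S=S(\alpha,T)$, a legitimate choice since it is a set of zeros spaced by at least one. Then the left side of (\ref{Eq:9.12}) is exactly $\sum_{\rho\in S(\alpha,T)}\left|\sin((t-t^\prime)\log t)/((t-t^\prime)\log t)\right|$, and summing the pointwise lower bound gives
\[
\alpha\cdot\sharp S(\alpha,T)\ll\sum_{\rho\in S(\alpha,T)}\left|\frac{\sin((t-t^\prime)\log t)}{(t-t^\prime)\log t}\right|,
\]
whose right side is controlled by (\ref{Eq:9.12}). Dividing through by $\alpha$, i.e. multiplying by $(\log T)^{1/2}$, yields
\[
\sharp S(\alpha,T)\ll(\log T)^{1/2}\left(\frac{T}{\log T}(\log D)^6+T\log T\,L(1,\chi_{-D})^{1/2}(\log D)^3\right).
\]
The first term becomes $T(\log T)^{-1/2}(\log D)^6$ and the second becomes $T(\log T)^{3/2}L(1,\chi_{-D})^{1/2}(\log D)^3$, matching the shape of (\ref{Eq:3}) apart from the $L$-value.

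Finally I would eliminate the $L$-value in favor of arithmetic data via Dirichlet's class number formula (\ref{Eq:CNF}): since $L(1,\chi_{-D})=\pi h(-D)D^{-1/2}$, we have $L(1,\chi_{-D})^{1/2}\ll D^{-1/4}h(-D)^{1/2}$, which turns the second term into $T(\log T)^{3/2}(\log D)^3 D^{-1/4}h(-D)^{1/2}$ and completes the proof. There is no serious obstacle here; the two points requiring care are the uniformity of the lower bound $\gg\alpha$ for $\sin(x)/x$ near its first zero (harmless, since $\alpha=(\log T)^{-1/2}$ is small) and the bookkeeping of the powers of $\log T$ introduced by the factor $1/\alpha$.
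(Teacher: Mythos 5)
Your proposal is correct and follows essentially the same route as the paper: lower-bound each summand over $S(\alpha,T)$ by $\gg\alpha$, apply (\ref{Eq:9.12}), divide by $\alpha=(\log T)^{-1/2}$, and substitute the class number formula (\ref{Eq:CNF}) for $L(1,\chi_{-D})^{1/2}$. The only cosmetic difference is that the paper obtains the pointwise bound $\alpha\le\left|\sin((t-t')\log t)/((t-t')\log t)\right|$ from the elementary inequality $1-x<\sin(\pi x)/(\pi x)$ for $0<x<1$, whereas you use monotonicity of $\sin(x)/x$ on $[0,\pi)$ and evaluate at the endpoint $\pi(1-\alpha)$; both are equally valid.
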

\begin{proof}
One easily deduces that for $0<x<1$, $1-x<\sin(\pi x)/(\pi x)$.  Now take $x=(t-t^\prime)\log (t)/\pi$;
from (\ref{defs}) we deduce that
\[
\alpha \le \left|\frac{\sin(t-t^\prime)\log(t)}{(t-t^\prime)\log(t)}\right|.
\]
So $\sharp S(\alpha,T)\cdot\alpha$ is less than the right hand side of (\ref{Eq:9.12}) above.
Plug  Dirichlet's analytic class number formula (\ref{Eq:CNF}) into (\ref{Eq:9.12}) to deduce that
\[
\sharp S(\alpha,T)\cdot\alpha\ll\frac{T}{\log T}(\log D)^6+
T\log T (\log D)^3 D^{-1/4}h(-D)^{1/2}.
\]
(We have absorbed a term $\sqrt{\pi}$ into the universal constant implied by the $\ll$.)  Setting $\alpha=1/\sqrt{\log T}$ gives (\ref{Eq:3}).
\end{proof}

In \cite{CI} the next step is to make each of the two terms on the right side of (\ref{Eq:3}) less than $ T/(\log T)^{\delta}$, where $0<\delta<1/2$ is a free parameter.  For concreteness, we will now choose $\delta=1/5$.  Thus, we desire
\[
(\log D)^6<(\log T)^{3/10}\quad\text{or}\quad (\log D)^{20}< \log T,
\]
and also that
\[
(\log T)^{17/10}<(\log D)^{-3} D^{1/4}h(-D)^{-1/2},
\]
i.e.
\[
\log T<(\log D)^{-30/17} D^{5/34}h(-D)^{-5/17}.
\]
Thus, on the interval
\begin{equation}\label{Eq:4}
 (\log D)^{20}< \log T
 <(\log D)^{-30/17} D^{5/34}h(-D)^{-5/17},
\end{equation}
we have that
\begin{equation}\label{almost}
\sharp S(\alpha,T)\ll\frac{T}{(\log T)^{1/5}}
\end{equation}
(where we have absorbed a factor of $2$ into the universal constant implied by the $\ll$.)  

\begin{remark} This is unconditional.  However, a little arithmetic show that the interval (\ref{Eq:4}) is nonempty only when
\begin{equation}\label{Eq:CIBound}
h(-D)<\frac{D^{1/2}}{\log(D)^{74}}.
\end{equation}
The reader should compare this inequality with that of Siegel's Theorem (\ref{Eq:SiegelThm}).  (Note also the right side above is not greater than $1$ until $10^{445}<D$.)
\end{remark}

Again following \S10 of \cite{CI} we will next drop the side condition that the $\rho$ in $\mathcal S(\alpha,T)$ are spaced by one, at a cost of an extra factor of $\log T$ in the bound.  Specifically, let $\mathcal R(T)$ be the set of zeros of $\zeta(s)$ on the critical line $\rho=1/2+it$ with $2\le t\le T$ such that
\begin{equation}\label{close}
|t-t^\prime|\le\frac{\pi}{\log t}\left(1-\frac{1}{\sqrt{\log t}}\right)
\end{equation}
where $\rho^\prime=1/2+it^\prime$ is the nearest zero on the critical line with $t<t^\prime$.  (The average gap between consecutive zeros is $2\pi/\log t$, so $\mathcal R(T)$ selects zeros with slightly less than half the average gap.)
\begin{theorem}  For $T$ in the same interval defined by (\ref{Eq:4}), i.e.
\[
 (\log D)^{20}< \log T
 <(\log D)^{-30/17} D^{5/34}h(-D)^{-5/17},
\]
we have
\begin{equation}\label{main}
\sharp \mathcal R(T) \ll T (\log T)^{4/5}.
\end{equation}
where the constant implied by the $\ll$ is universal.
\end{theorem}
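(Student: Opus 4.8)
The plan is to derive the bound on $\sharp\mathcal{R}(T)$ from the already-established bound (\ref{almost}) on $\sharp S(\alpha,T)$ by a coloring argument that trades the spacing-by-one hypothesis for a single factor of $\log T$. The first thing I would check is that every zero counted by $\mathcal{R}(T)$ automatically satisfies the closeness condition (\ref{defs}) that defines $\mathcal{S}(\alpha,T)$. Indeed, for $t\le T$ we have $\log t\le\log T$, hence $1/\sqrt{\log t}\ge 1/\sqrt{\log T}=\alpha$, so that
\[
1-\frac{1}{\sqrt{\log t}}\le 1-\alpha.
\]
Thus the right side of (\ref{close}) is at most the right side of (\ref{defs}), and so (\ref{close}) implies (\ref{defs}). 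The two sets therefore differ only in that $\mathcal{S}(\alpha,T)$ carries the additional requirement that its zeros be spaced by at least one, whereas $\mathcal{R}(T)$ does not.

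Next I would remove the spacing requirement by partitioning $\mathcal{R}(T)$ into color classes. The number of zeros of $\zeta(s)$ in any interval $[t,t+1]$ with $t\le T$ is $\ll\log T$ by the Riemann--von Mangoldt formula. Processing the zeros of $\mathcal{R}(T)$ in increasing order and assigning each the least color not already used by a zero within distance one below it, at most $\ll\log T$ colors are ever forbidden at any step, so the greedy coloring uses $k\ll\log T$ colors in all. Any two zeros of a single color class $\mathcal{R}_j$ are then at distance at least one: if a later zero lay within distance one of an earlier one of the same color, that color would have been forbidden when the later zero was processed. Hence each $\mathcal{R}_j$ consists of zeros spaced by at least one, and by the previous paragraph every such zero also satisfies (\ref{defs}).

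Each class $\mathcal{R}_j$ is therefore an admissible instance of $\mathcal{S}(\alpha,T)$, so the bound (\ref{almost}) applies to it verbatim, giving $\sharp\mathcal{R}_j\ll T/(\log T)^{1/5}$. Summing over the $k\ll\log T$ classes yields
\[
\sharp\mathcal{R}(T)=\sum_{j=1}^k\sharp\mathcal{R}_j\ll\log T\cdot\frac{T}{(\log T)^{1/5}}=T(\log T)^{4/5},
\]
as claimed, with a universal constant since both the per-class bound and the bound $k\ll\log T$ carry universal constants.

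The point that most deserves care is the direction of the inequality in the first paragraph: it is precisely the restriction $t\le T$ that makes the $t$-dependent closeness condition (\ref{close}) \emph{stronger} than the fixed condition (\ref{defs}), so that no zero of $\mathcal{R}(T)$ escapes the reach of Proposition 9.2. Beyond this, the only quantitative input is that the local density of zeros is $\ll\log T$, which bounds the number of colors and so accounts for exactly the one extra factor of $\log T$ that we pay; there is no genuine analytic obstacle once (\ref{almost}) is invoked class by class.
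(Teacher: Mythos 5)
Your proof is correct and takes essentially the same route as the paper: both reduce the claim to (\ref{almost}) by noting that (\ref{close}) implies (\ref{defs}) for $t\le T$ and then exploiting the $O(\log t)$ bound on the number of zeros per unit interval to pay exactly one factor of $\log T$ for dropping the spacing condition. Your greedy coloring is simply a constructive rephrasing of the paper's pigeonhole argument, which instead argues by contradiction that a too-large $\mathcal R(T)$ would permit the selection of $\gg T/(\log T)^{1/5}$ well-spaced zeros violating (\ref{almost}).
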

\begin{proof}
The total number of zeros $\rho$ of $\zeta(s)$ in an interval $[t,t+1)$ of length $1$ is $O(\log t)$  \cite[Theorem 9.2]{Tit}.  Then (\ref{main}) follows from (\ref{almost}) and the pigeonhole principle.  That is, if (\ref{main}) failed to be true, since no more than $O(\log t)$ of these zeros can be in any one unit interval, we would be able to select $\gg$ $T/(\log T)^{1/5}$ well spaced zeros.  Since
\[
t<T\quad \Rightarrow\quad 1-\frac{1}{\sqrt{\log t}}<1-\frac{1}{\sqrt{\log T}},
\]
these zeros lie in $\mathcal S(\alpha,T)$, and this violates (\ref{almost}).
\end{proof}
\section*{Discussion}
There are strong results \cite{D,H,Stark,Stark1} showing that a Landau-Siegel zero arising from a discriminant with $h(-D)=1$ would influence the spacing of the zero of $\zeta(s)$, tending to make them nearly periodic (over a certain range.)\ \ 
Motivated by this, Montgomery began to investigate the spacing in general and was able to give evidence that the \lq pair correlation\rq\ for the (not necessarily consecutive) zeros was
\[
1-\left(\frac{\sin\pi x}{\pi x}\right)^2.
\]
(To be more precise, he proved this for test functions which have Fourier transform supported in $[-2,2]$)\ \ 
A lucky encounter with Freeman Dyson lead to the realization this was the pair correlation for eigenvalues of random hermitian matrices.  Random Matrix Theory, previously applied by physicists to model problems in nuclear physics, now had applications in number theory.  (See \cite{Firk} for an exposition.)
This same theory suggests that the renormalized gaps between \emph{consecutive} zeros should be distributed according to the Gaudin distribution.  This is complicated to compute \cite{FO}, but well approximated by the function called the \lq Wigner surmise\rq\  (in the GUE case)
\[
p(x)=\frac{32}{\pi^2} x^2 \exp\left(-\frac{4}{\pi} x^2\right),
\]
see \cite[(202) p. 55]{Porter}.
Figure \ref{F:plot} shows the graph of $p(x)$ (solid), and a power series approximation to the Gaudin distribution, to 30 terms (dotted).  Both are in good agreement out to $x=1/2$.  In \emph{Mathematica} we easily compute that
\[
\int_0^{1/2}p(x)\, dx=0.11200,
\]
while the power series approximation give an estimate of  $0.11097$ for the corresponding integral.

The total number $N(T)$ of zeros of $\zeta(s)$ of height $\le T$  satisfies \cite[Theorem 9.4]{Tit}
\[
N(T)\sim \frac{T}{2\pi}\log(T).
\]
Since we conjecture that about $11\%$ of the gaps \lq should\rq\ be less than half the average, 
\[
\sharp \mathcal R(T) \text{ \lq should\rq\ be }\sim \, 0.11 \cdot \frac{T}{2\pi}\log(T).
\]
This contradicts (\ref{main}) if $T$ (which is determined by $D$) is sufficiently large.  What \lq sufficiently large\rq\ means here is determined by the size of the uncomputed universal constant in the $\ll$ symbol.  To be more precise, denote this unknown constant by $C$.  Then 
\[
C\cdot T\log(T)^{4/5} < 0.11 \cdot \frac{T}{2\pi}\log(T)
\]
exactly when
\begin{equation}\label{Eq:C}
\exp\left((2\pi C/0.11)^5\right)<T.
\end{equation}
Conrey and Iwaniec conclude by forming the contrapositive statement: Roughly speaking, if (as Random Matrix Theory predicts) (\ref{main}) fails to be true, then 
\begin{equation}\label{Eq:contra}
\frac{D^{1/2}}{\log(D)^{74}}\le h(-D).
\end{equation}
But again, the contradiction is avoided only for $D$ large enough so that the corresponding $T$ satisfies (\ref{Eq:C}).  Via the first inequality of (\ref{Eq:4}), we see that
\[
\exp\left(\left(2\pi C/0.11\right)^{1/4}\right)<D
\]
suffices.
\begin{figure}
\begin{center}
\includegraphics[scale=.8, viewport=0 0 320 240,clip]{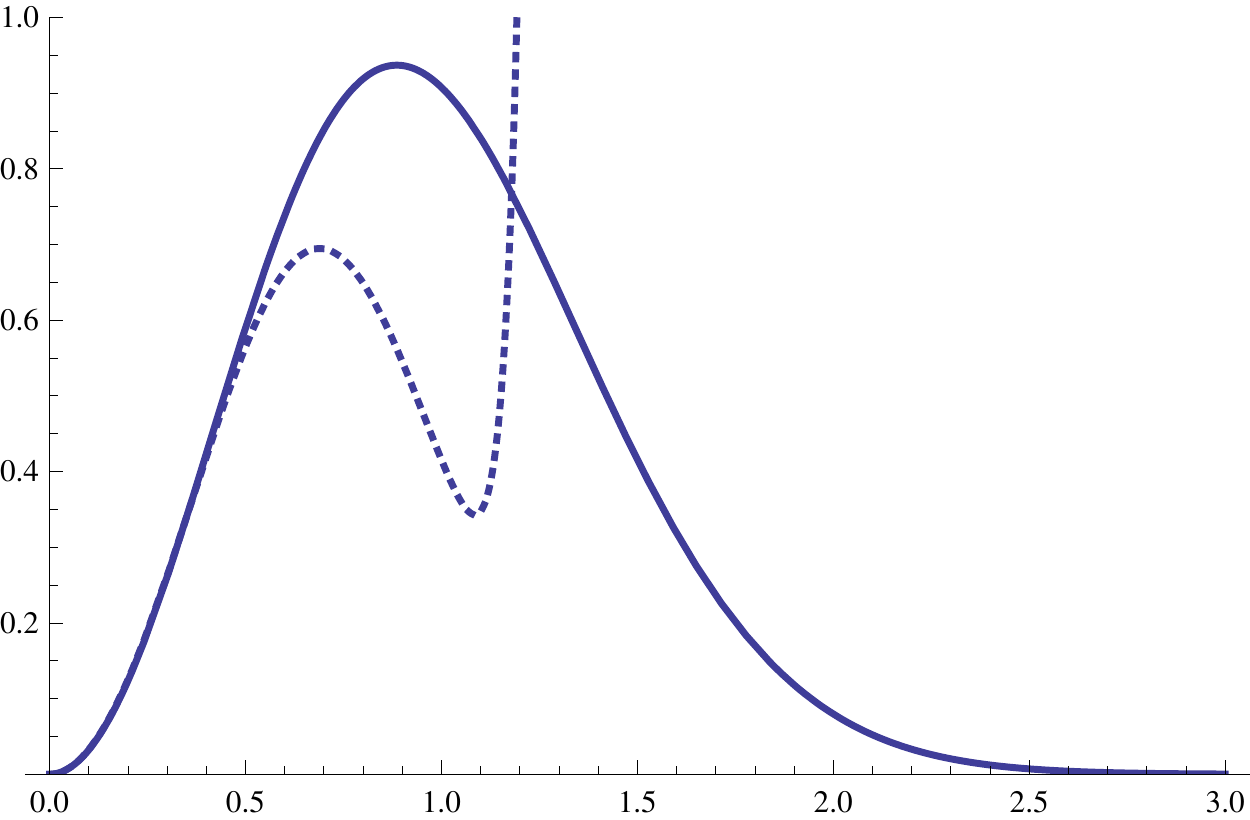}
\caption{The GUE Wigner surmise $p(x)$ (solid) v. the power series approximation to the Gaudin distribution (dotted)}\label{F:plot}
\end{center}
\end{figure}

\begin{table}
\begin{center}
\renewcommand{\arraystretch}{1.25}
\begin{tabular}{ r r r } 
$T$ & $N(T)$ & proportion\\ \hline
$500000$ & $818414$ & $0.02375$ \\
$1000000$ & $1747145$ & $0.02584$ \\
$1500000$ & $2717515$ & $0.02695$ \\
$2000000$ & $3714925$ & $0.02773$ \\
$2500000$ & $4732442$ & $0.02828$ \\
$3000000$ & $5765983$ & $0.02873$ \\
$3500000$ & $6812849$ & $0.02911$ \\
$4000000$ & $7871121$ & $0.02948$ \\
$4500000$ & $8939367$ & $0.02976$ \\
$4992381.$ & $10000000$ & $0.02998$\\
&&
\end{tabular}
\caption{Proportion of zeros at height $T$ satisfying (\ref{close})}\label{Ta:zerodata}
\end{center}
\end{table}

\section*{Numerical Experiment}  In \emph{Mathematica}, we have access to $10^7$  zeros of $\zeta(s)$ up to height $4.992\cdot 10^6$.  We can easily use this data to determine the proportion of zeros up to heights $T<4.992\cdot 10^6$ which satisfy (\ref{close}), see Table \ref{Ta:zerodata}.  The fact that the proportion of small gaps is so much less than $0.11$ is not surprising; the function $\zeta(s)$ approaches its asymptotic behavior very slowly.   We do observe in Table \ref{Ta:zerodata} that the observed proportion of close zeros is monotonically increasing.   However, this means that to use the bound (\ref{Eq:contra}) for $D$ with corresponding $T$ in the above range, we would need to replace $0.11$ in (\ref{Eq:C}) with the corresponding entry in the right column of Table \ref{Ta:zerodata}.

\section*{Application}
 For $-D<0$ any fundamental discriminant with $g$ prime factors, genus theory tells us the following.  The principal genus $\mathcal P(-D)$ fits into an exact sequence with the class group $\mathcal C(-D)$:
\[
 \mathcal P(-D)\overset{\text{def.}}=\mathcal C(-D)^2 \hookrightarrow \mathcal C(-D) \twoheadrightarrow \mathcal C(-D)/ \mathcal C(-D)^2 \simeq (\mathbb Z/2)^{g-1}.
\]
The class group $\mathcal C(-D)$ has order $h(-D)=p(-D)\cdot 2^{g-1}$.  The genera of forms are exactly what one can detect about the range via congruence conditions modulo $D$.  For example,with $-D=-39$  there are four classes of forms falling into two genera.  In a shorthand notation we write them as $\{1,1,10\}$, $\{3,3,4\}$, and $\{2,\pm1,5\}$.  A prime $p\ne 3,13$ can be written as
\begin{align*}
&\left.
\begin{array}{c}
p=x^2+xy+10y^2\\
\text{or}\\
p=3x^2+3xy+4y^2
\end{array}
\right\}&
&\Leftrightarrow &
&p\equiv 1,4,10,16,22,25 \bmod 39,\\
&\quad p=2x^2\pm xy+5y^2&
&\Leftrightarrow &
&p\equiv 2,5,8,11,20,32 \bmod 39.
\end{align*}
Classifying discriminants with a given number of classes per genus is the problem Gauss mentioned in the quote above.
Indeed, the discriminants for which there is one class per genus (and hence for which congruence conditions alone determine which form represents a given prime) are know only under the assumption of GRH; unconditionally there is at most one other than the known 65 examples, see \cite{W}.

Oesterl\'{e}, \cite{Oes} observed that the Goldfeld-Gross-Zagier lower bound (\ref{Eq:GGZ}) was too weak to say anything about discriminants with one class per genus, i.e. $p(-D)=1$.  In this section we apply the conditional bound
(\ref{Eq:contra}) to the general problem of bounding $p(-D)$ from below.

From Lemma 2 of \cite{HS} we have that
\begin{equation}\label{oldineq}
2^g<\exp(\log D \log(2)/W(\log D ))=D^{\log(2)/W(\log(D))},
\end{equation}
where $W(x)$ denotes the Lambert $W$-function, i.e. the inverse function of $f(w)=w\exp(w)$ (see  \cite{E}, \cite[p. 146 and p. 348, ex 209]{PS}).  For $x\ge0$ it is positive, increasing, and concave down.  The Lambert $W$-function is also sometimes called the product log, and is implemented as \texttt{ProductLog} in \emph{Mathematica}.  As $x\to\infty$,
\[
W(x)\sim \log(x)-\log\log(x).
\]

Thus the bound  (\ref{Eq:contra}) gives that
\begin{equation}\label{Eq:pboun}
2\cdot \frac{D^{1/2-\log(2)/W(\log(D))}}{\log(D)^{74}}\le p(-D).
\end{equation}


\begin{thebibliography}{99}
\bibitem{CI} B.\ Conrey and H.\ Iwaniec, \emph{Spacing of zeroes of Hecke L-functions and the class number problem}, Acta Arith,  \textbf{103} (2002),  259-312.
\bibitem{D} M. Deuring, \emph{Zetafunktionen quadratischer Formen}, J. Reine Angew. Math., \textbf{172} (1934), pp. 226-252.
\bibitem{E} L.\ Euler, \emph{De serie Lambertiana plurimisque eius insignibus proprietatibus}, Opera Omnia Ser. 1 Vol. 6,  350-369.
\bibitem{Firk} F.W.K.\ Firk and S.J.\ Miller, \emph{Nuclei, Primes and the Random Matrix Connection}, Symmetry \textbf{1} (2009), 64-105. 
\href{http://dx.doi.org/10.3390/sym1010064}{\texttt{http://dx.doi.org/10.3390/sym1010064}}
\bibitem{FO} P.\  Forrester and A.\ Odlyzko, \emph{A nonlinear equation and its application to nearest neighbor spacings for zeros of the zeta function and eigenvalues of random matrices} 
\newline
\mbox{\href{http://www.cecm.sfu.ca/organics/papers/odlyzko/nonlinear/html/paper.html}{\url{http://www.cecm.sfu.ca/organics/papers/odlyzko/nonlinear/html/paper.html}}}
\bibitem{Gauss} C.\ Gauss, Disquisitiones Arithmeticae  (translated by Arthur A.\ Clarke), Yale University Press, 1966.
\bibitem{HB} R.\ Heath-Brown, \emph{Small Class Number and the Pair Correlation of Zeros}, talk at the conference \lq In Celebration of the Centenary of the Proof of the Prime Number Theorem: A Symposium on the Riemann Hypothesis\rq\ Seattle, 1996.  Videotape: American Institute of Mathematics.
\bibitem{H} H. Heilbronn, \emph{On the class number in imaginary quadratic fields}, Quart. J. Math., \textbf{5} (1934),  150-160.
\bibitem{HS}  K.\ Hopkins and J.\ Stopple, \emph{Lower bounds for the principal genus of definite binary quadratic forms}, Integers \textbf{10} (2010), 257-264.
\bibitem{Montgomery} H.L.\ Montgomery, \emph{The pair correlation of zeros of the zeta function}, in Proceedings  Symp. Pure Math. \textbf{24} (1972), 190-202.
\bibitem{M2} \bysame, \emph{Corr\'elations dans l'ensemble des z\'eros de la fonction z\^eta}, S\'{e}minaire de Th\'{e}orie des Nombres, 1971-72 (Univ. Bordeaux I, Talence), Exp. No. 19
\bibitem{Oes} J.\ Oesterl\'{e}, \emph{Nombres de classes des corps quadratiques imaginaires}, S\'{e}m. Bourbaki, vol. 1983/84,  Ast\'{e}risque, no. 121-122 (1985),  309-323.
\bibitem{PS} G.\ P\'{o}lya and G.\ Szeg\"{o}, Aufgaben und Lehrs\"atze der Analysis. Berlin, 1925. Reprinted as Problems and Theorems in Analysis I. Berlin: Springer-Verlag, 1998.
\bibitem{Porter} C.E.\ Porter, Statistical Theories of Spectra: Fluctuations, Academic Press, 1965.
\bibitem{Stark}  H.\ Stark, \emph{On the tenth complex quadratic field with class number one}, Ph.D. thesis, University of California, Berkeley, 1964.
\bibitem{Stark1} \bysame, \emph{On the zeros of Epstein's zeta function}, Mathematika, \textbf{14}, 1967,  47-55.
\bibitem{Stopple} J.\ Stopple,  \emph{Notes on the Deuring-Heilbronn phenomenon}, Notices Amer. Math. Soc. \textbf{53} (2006),  864-875.  
\newline
\href{http://www.ams.org/notices/200608/fea-stopple.pdf}{\texttt{http://www.ams.org/notices/200608/fea-stopple.pdf}}
\bibitem{Tit} E.C.\ Titchmarsh, The Theory of the Riemann Zeta Function, Oxford Press, 2nd ed., 1986.
\bibitem{Watkins} M.\ Watkins, \emph{Class numbers of imaginary quadratic fields}, Ph.D. thesis, University of Georgia, 2000.
\bibitem{W} P. Weinberger, \emph{Exponents of the class groups of complex quadratic fields}, Acta Arith. \textbf{XXII} (1973),  117-124.

\end{thebibliography}
\end{document}